\DeclareMathOperator{\KH}{KH}
\DeclareMathOperator{\GCH}{GCH}
\DeclareMathOperator{\cf}{cf}
\DeclareMathOperator{\Col}{Col}
\def\MPB{{\mathbb{P}}}
\newtheorem{theorem}{Theorem}[section]
\newtheorem{remark}[theorem]{Remark}
\newtheorem{question}[theorem]{Question}
\numberwithin{equation}{section}
\def\rmark{\mbox{$\rm\bf\rule{0.06em}{1.45ex}\kern-0.05em R$}}
\def\pmark{\mbox{$\rm\bf\rule{0.06em}{1.45ex}\kern-0.05em P$}}
\def\nmark{\mbox{$\rm\bf\rule{0.06em}{1.45ex}\kern-0.05em N$}}
\def\vdash{\mbox{$\rm\| \kern-0.13em -$}}
\def\rmark{\mbox{$\rm\bf\rule{0.06em}{1.45ex}\kern-0.05em R$}}
\def\pmark{\mbox{$\rm\bf\rule{0.06em}{1.45ex}\kern-0.05em P$}}
\def\nmark{\mbox{$\rm\bf\rule{0.06em}{1.45ex}\kern-0.05em N$}}
\def\vdash{\mbox{$\rm\| \kern-0.13em -$}}
\newcommand{\lusim}[1]{\smash{\underset{\raisebox{1.2pt}[0cm][0cm]{$\sim$}}
{{#1}}}}
\begin{document}

\title[The generalized Kurepa hypothesis at singular cardinals]{The generalized Kurepa hypothesis at singular cardinals}

\author[M. Golshani ]{Mohammad
  Golshani }


~
~~
~
\thanks{The  author's research has been supported by a grant from IPM (No. 97030417). He also thanks Menachem Magidor
for sharing his ideas during the  14th International Workshop in Set Theory at Luminy, in particular, the results of this paper are obtained from discussions with him.} 

\thanks{In personal communication, Stevo Todorcevic informed the author that  Theorem \ref{main theorem from chang conjecture} has been obtained by him before; for example can be found  on
page 231 of his book ``Walks on ordinals and their characteristics''. However our proof is different from him.}
\maketitle

\begin{abstract}
We discuss the generalized Kurepa hypothesis $\KH_{\lambda}$ at singular cardinals $\lambda$. In particular, we answer  questions of
Erd\H{o}s-Hajnal \cite{erdos-hajnal} and
 Todorcevic \cite{todorcevic1}, \cite{todorcevic2} by showing that $\GCH$ does not imply $\KH_{\aleph_\omega}$ nor the existence of a family $ \mathcal{F} \subseteq [\aleph_\omega]^{\aleph_0}$
of size $\aleph_{\omega+1}$ such that $\mathcal{F} \restriction X$ has size $\aleph_0$ for every $X \subseteq S, |X|=\aleph_0$.
\end{abstract}

\section{introduction}
For an infinite cardinal $\lambda$ let the generalized Kurepa hypothesis at $\lambda,$ denoted $\KH_{\lambda}$, be the assertion: there exists a family $\mathcal{F} \subseteq P(\lambda)$ such that $|\mathcal{F}|> \lambda$ but $|\mathcal{F} \restriction X| \leq |X|$ for every infinite $X \subseteq \lambda, |X| < \lambda$,
where $\mathcal{F} \restriction X=\{ t \cap X: t \in \mathcal{F}  \}$.

By a theorem of Erd\H{o}s-Hajnal-Milner \cite{erdos-hajnal-milner}, if $\lambda$ is a singular cardinal of uncountable cofinality,
$\theta^{\cf(\lambda)} < \lambda$ for all $\theta<\lambda$ and if $\mathcal{F} \subseteq P(\lambda)$ is such that the set $\{\alpha < \lambda:|\mathcal{F} \restriction \alpha| \leq |\alpha|  \}$ is stationary in $\lambda$, then $|\mathcal{F}| \leq \lambda$. In particular, $\GCH$ implies   $\KH_{\lambda}$ fails for all singular cardinals $\lambda$ of uncountable cofinality. On the other hand, by an unpublished result of Prikry \cite{prikry}, $\KH_{\lambda}$ holds in $L,$ the G\"{o}del's constructible universe, for singular cardinals of countable cofinality (see \cite{todorcevic2}).
Later, Todorcevic \cite{todorcevic1}, \cite{todorcevic2} improved Prikry's theorem by showing that if $\lambda$ is a singular cardinal of countable cofinality, then $\Box_\lambda$ implies $\KH_\lambda.$ The following question is asked in \cite{todorcevic1} and \cite{todorcevic2}.

\begin{question}
Does $\GCH$ imply $\KH_{\aleph_\omega}?$
\end{question}
The question is also related to the following question of Erd\H{o}s-Hajnal \cite{erdos-hajnal} (question 19$/$E)
\begin{question}
Assume $\GCH$. Let $|S|=\aleph_\omega$. Does there exist a family $\mathcal{F}, |\mathcal{F}|=\aleph_{\omega+1}, \mathcal{F} \subseteq [S]^{\aleph_0}$
such that $\mathcal{F} \restriction X$ has size $\aleph_0$ for every $X \subseteq S, |X|=\aleph_0?$
\end{question}
We show that, relative to the existence of large cardinals,  both of the above questions can consistently be false, and so they are independent of $\text{ZFC}.$

\section{$\KH_\lambda$ fails above a supercompact cardinal}
In this section we prove the following.
\begin{theorem} \label{above supercompact}
Suppose $\kappa$ is a supercompact cardinal and $\lambda \geq \kappa.$ Then $\KH_\lambda$ fails.
\end{theorem}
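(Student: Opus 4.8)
The plan is to argue by contradiction using a supercompactness embedding together with a reflection argument. So suppose $\mathcal{F} \subseteq P(\lambda)$ witnesses $\KH_\lambda$: thus $|\mathcal{F}| > \lambda$, while $|\mathcal{F} \restriction X| \le |X|$ for every infinite $X \subseteq \lambda$ with $|X| < \lambda$. Since $\kappa$ is supercompact it is in particular $\lambda$-supercompact, so I fix an elementary embedding $j \colon V \to M$ with $\crit(j) = \kappa$, $j(\kappa) > \lambda$, and ${}^{\lambda}M \subseteq M$. As $\kappa \le \lambda$ we have $j(\lambda) \ge j(\kappa) > \lambda$, so $\lambda$ is, in the sense of $M$, an infinite cardinal strictly below the $M$-cardinal $j(\lambda)$.

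First I would use the closure of $M$ to locate a small reflecting set inside $M$. The sequence $\langle j(\alpha) : \alpha < \lambda \rangle$ is a $\lambda$-sequence of elements of $M$, hence belongs to $M$; consequently $Y := j[\lambda] = \{ j(\alpha) : \alpha < \lambda \}$ is an element of $M$, it is a subset of $j(\lambda)$, and $|Y|^{M} \le \lambda < j(\lambda)$. By elementarity $j(\mathcal{F})$ witnesses $\KH_{j(\lambda)}$ in $M$, and applying this to the infinite set $Y$ yields $\bigl| j(\mathcal{F}) \restriction Y \bigr|^{M} \le |Y|^{M} \le \lambda$. Since $M \subseteq V$, an injection of $j(\mathcal{F}) \restriction Y$ into $\lambda$ that exists in $M$ exists also in $V$, so $\bigl| j(\mathcal{F}) \restriction Y \bigr|^{V} \le \lambda$.

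The second step is to embed a faithful copy of $\mathcal{F}$ into $j(\mathcal{F}) \restriction Y$. For $t \in \mathcal{F}$ we have $j(t) \in j(\mathcal{F})$ by elementarity, and for every $\alpha < \lambda$ the equivalence $\alpha \in t \iff j(\alpha) \in j(t)$ holds; hence $j(t) \cap Y = \{ j(\alpha) : \alpha \in t \} = j[t]$. Thus $t \mapsto j[t] = j(t) \cap Y$ is a map from $\mathcal{F}$ into $j(\mathcal{F}) \restriction Y$, and it is injective because $j$ is: if $\alpha \in t \setminus t'$ then $j(\alpha) \in j[t] \setminus j[t']$. This map is a set in $V$, so $|\mathcal{F}|^{V} \le \bigl| j(\mathcal{F}) \restriction Y \bigr|^{V} \le \lambda$, contradicting $|\mathcal{F}| > \lambda$. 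Hence no such $\mathcal{F}$ exists, i.e.\ $\KH_\lambda$ fails.

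Most of the argument is bookkeeping once $j$ is fixed; the two points that need care are verifying $j(\lambda) > \lambda$ (which is where $\kappa \le \lambda$ is used, together with $j(\kappa) > \lambda$) and the identity $j(t) \cap j[\lambda] = j[t]$, which is what makes the restriction $j(\mathcal{F}) \restriction j[\lambda]$ computed in $M$ absorb an isomorphic copy of the $V$-family $\mathcal{F}$. Note also that $\lambda$-supercompactness of $\kappa$ already suffices for a single fixed $\lambda \ge \kappa$; full supercompactness is needed only to handle all such $\lambda$ simultaneously.
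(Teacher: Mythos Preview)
Your proof is correct and rests on the same key observation as the paper's: the map $t \mapsto j(t)\cap j[\lambda]=j[t]$ is an injection of $\mathcal{F}$ into $j(\mathcal{F})\restriction j[\lambda]$. The paper runs this forward inside $M$, using a $\lambda^{+}$-supercompactness embedding so that $\mathcal{F}\in M$; it concludes that $M$ sees $j(\mathcal{F})\restriction j[\lambda]$ as having size $\geq\lambda^{+}$, and then reflects via the normal measure on $P_{\kappa}(\lambda)$ to obtain a measure-one set of $x\in P_{\kappa}(\lambda)$ with $|\mathcal{F}\restriction x|\geq |x|^{+}$. You instead run the comparison in $V$: assuming $\mathcal{F}$ is a $\KH_{\lambda}$-family, elementarity makes $j(\mathcal{F})\restriction j[\lambda]$ small in $M$ and hence in $V$, so the injection bounds $|\mathcal{F}|$ by $\lambda$. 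Your packaging is a bit more economical---only $\lambda$-supercompactness is needed since you never require $\mathcal{F}\in M$---while the paper's version yields the extra information that the failure is witnessed on a measure-one set of subsets of size ${<}\kappa$.
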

\begin{proof}
 Let $\mathcal{F} \subseteq P(\lambda)$ be of size $\geq \lambda^+.$  Let $j: V \to M$ be a $\lambda^+$-supercompactness embedding with
$crit(j)=\kappa$. Also let $U$ be the normal measure on $P_\kappa(\lambda)$ derived from $j$, .i.e.,
\[
U=\{X \subseteq P_\kappa(\lambda): j[\lambda] \in j(X)   \}.
\]
We have
\begin{itemize}
\item $M \models$``$j(\mathcal{F}) \subseteq P(j(\lambda))$ is of size $\geq j(\lambda)^+$''.
\item $j''[\lambda] \in M$ and $M \models$``$|j''[\lambda]|=\lambda < j(\lambda)$''.
\item $\mathcal{F} \in M$
\item $a \neq b \in \mathcal{F} \implies j(a) \cap j''[\lambda] \neq j(b) \cap j''[\lambda].$
\end{itemize}
In particular,
\begin{center}
 $M \models$``$|j(\mathcal{F})\restriction j''[\lambda]| \geq |\mathcal{F}| \geq \lambda^+$''.
 \end{center}
 This implies that
 \[
 \{x \in P_\kappa(\lambda): |\mathcal{F} \restriction x| \geq |x|^+            \} \in U.
 \]
In particular, $\mathcal{F}$ is not a $\KH_\lambda$-family.
\end{proof}
\begin{remark}
The above result is optimal in the sense that we can not in general find a set $x \subseteq \lambda$ of size in the interval $[\kappa, \lambda)$
such that $|\mathcal{F} \restriction x| \geq |x|^+.$ To see this assume $\kappa$
is supercompact and Laver indestructible. Then one can easily define a $\kappa$-directed closed forcing notion which adds a family
$\mathcal{F} \subseteq P(\lambda)$ such that $|\mathcal{F}| \geq \lambda^+,$
but  $|\mathcal{F} \restriction x| \leq |x|$ for any set $x$ with $\kappa \leq |x| < \lambda.$
\end{remark}

\section{The Chang's conjecture and $\KH_{\aleph_\omega}$}
In this section we prove our main theorem by showing a consistent negative answer to the questions of Erd\H{o}s-Hajnal and Todorcevic.
Recall from \cite{levinski} that ``$\GCH+$ the  Chang's conjecture $(\aleph_{\omega+1}, \aleph_\omega) \twoheadrightarrow (\aleph_1, \aleph_0)$''
is consistent,  relative to the existence of a 2-huge cardinal. See also \cite{hayut}, where the large cardinal assumption is reduced to the existence of a $(+\omega+1)$-subcompact cardinal $\kappa$.
\begin{theorem} 
\label{main theorem from chang conjecture}
Assume $\GCH+$Chang's conjecture $(\aleph_{\omega+1}, \aleph_\omega) \twoheadrightarrow (\aleph_1, \aleph_0)$. Then $\KH_{\aleph_\omega}$ fails.
Also, there does not exist a family $\mathcal{F}  \subseteq [\aleph_\omega]^{\aleph_0}, |\mathcal{F}| \geq \aleph_{\omega+1}$
such that $\mathcal{F} \restriction X$ has size $\aleph_0$ for every $X \subseteq S, |X|=\aleph_0.$
\end{theorem}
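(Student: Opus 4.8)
The plan is to apply Chang's conjecture $(\aleph_{\omega+1},\aleph_\omega)\twoheadrightarrow(\aleph_1,\aleph_0)$ to a first-order structure that codes a putative $\KH_{\aleph_\omega}$-family, extracting from it a single countable subset of $\aleph_\omega$ on which the family has uncountably many traces. So I would suppose for contradiction that $\mathcal{F}\subseteq P(\aleph_\omega)$ witnesses $\KH_{\aleph_\omega}$; since $\GCH$ gives $2^{\aleph_\omega}=\aleph_{\omega+1}$, necessarily $|\mathcal{F}|=\aleph_{\omega+1}$. One cannot simply work inside a model $H(\theta)$, because a family of size $\aleph_{\omega+1}$ is not an element of any $H(\theta)$ of size $\le\aleph_{\omega+1}$; instead I would build an explicit structure on $\aleph_{\omega+1}$ itself. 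Identify $\aleph_\omega$ with its set of predecessors, use the interval $[\aleph_\omega,\aleph_{\omega+1})$ as an index set, and write $\mathcal{F}=\{t_\xi : \aleph_\omega\le\xi<\aleph_{\omega+1}\}$ with the $t_\xi$ pairwise distinct subsets of $\aleph_\omega$. Let $\mathfrak{A}=\langle\aleph_{\omega+1};\,A,\,E\rangle$, where $A=\aleph_\omega$ is a unary predicate and $E=\{(\xi,\alpha): \aleph_\omega\le\xi<\aleph_{\omega+1},\ \alpha\in t_\xi\}$, so that $E(\xi,-)$ enumerates $t_\xi$. This finite-language structure satisfies the sentence
\[
\varphi:\quad \forall\xi\,\forall\xi'\,\big(\,\neg A(\xi)\wedge\neg A(\xi')\wedge\xi\neq\xi'\ \rightarrow\ \exists\alpha\,(\,A(\alpha)\wedge(E(\xi,\alpha)\leftrightarrow\neg E(\xi',\alpha))\,)\,\big),
\]
which merely asserts that the sets $t_\xi$ are pairwise distinct.

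Next I would apply Chang's conjecture to obtain an elementary substructure $\mathfrak{B}\prec\mathfrak{A}$ whose universe $B$ satisfies $|B|=\aleph_1$ and $|B\cap A|=\aleph_0$. Set $X=B\cap A=B\cap\aleph_\omega$, which is infinite of size $\aleph_0$, so $|X|<\aleph_\omega$; and let $I=B\setminus A=B\cap[\aleph_\omega,\aleph_{\omega+1})$, the set of indices lying in $B$, which has $|I|=\aleph_1$ since $|B|=\aleph_1$ while $|B\cap A|=\aleph_0$. The key step is that for distinct $\xi,\xi'\in I$, elementarity applied to $\varphi$ yields some $\alpha\in B\cap A=X$ with $\alpha\in t_\xi\mathbin{\triangle}t_{\xi'}$, whence $t_\xi\cap X\neq t_{\xi'}\cap X$. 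Consequently $\{\,t_\xi\cap X : \xi\in I\,\}$ has cardinality $|I|=\aleph_1$, so $|\mathcal{F}\restriction X|\ge\aleph_1>\aleph_0=|X|$, contradicting the defining property of a $\KH_{\aleph_\omega}$-family since $X\subseteq\aleph_\omega$ is infinite of size $<\aleph_\omega$.

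For the second assertion I would run the same argument verbatim: taking $S=\aleph_\omega$ without loss of generality, any family $\mathcal{F}\subseteq[\aleph_\omega]^{\aleph_0}$ with $|\mathcal{F}|\ge\aleph_{\omega+1}$ again has $|\mathcal{F}|=\aleph_{\omega+1}$ under $\GCH$, and the construction above produces a countable $X\subseteq S$ with $|\mathcal{F}\restriction X|\ge\aleph_1$, contradicting that $\mathcal{F}\restriction X$ has size $\aleph_0$. I do not anticipate a genuine obstacle: the whole argument is a single application of Chang's conjecture, and the only point that needs a little care is that the witness $\alpha$ separating two members of $\mathcal{F}$ must be found inside the predicate $A$, so that it survives passage to $\mathfrak{B}$ and lands in $X=B\cap A$; this is automatic here precisely because every member of $\mathcal{F}$ is a subset of $\aleph_\omega=A$.
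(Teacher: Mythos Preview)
Your proof is correct and follows essentially the same approach as the paper: apply Chang's conjecture $(\aleph_{\omega+1},\aleph_\omega)\twoheadrightarrow(\aleph_1,\aleph_0)$ to a structure coding the putative family, and use elementarity to show that the $\aleph_1$-many indices in the substructure yield $\aleph_1$-many distinct traces on the countable part. The only difference is packaging: the paper works with the structure $(H_{\aleph_{\omega+1}},\in,\mathcal{F},\aleph_\omega,f)$, treating $\mathcal{F}$ as a unary predicate and adding a bijection $f\colon H_{\aleph_{\omega+1}}\to\mathcal{F}$ so that the elementary substructure automatically meets $\mathcal{F}$ in $\aleph_1$-many points; your explicit coding via $E\subseteq\aleph_{\omega+1}\times\aleph_{\omega+1}$ accomplishes the same thing more concretely. (So your aside that ``one cannot simply work inside a model $H(\theta)$'' is not quite right---the paper does exactly that, with $\mathcal{F}$ as a predicate rather than an element---but this does not affect the validity of your argument.)
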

\begin{proof}
Suppose towards contradiction that there exists a family $\mathcal{F}$ which witnesses $\KH_{\aleph_\omega}$.
Fix a bijection $f: H_{\aleph_{\omega+1}}\leftrightarrow \mathcal{F}$. Consider the structure
\[
\mathcal{A}=(H_{\aleph_{\omega+1}}, \in, \mathcal{F}, \aleph_\omega, f).
\]
Let $\mathcal{B}=(B, \in, \mathcal{G}, A, g   ) \prec \mathcal{A}$ be such that $|B|=\aleph_1$
and $|A|=\aleph_0$.

Note that $\mathcal{A} \models$``$\forall t \in \mathcal{F}, t \subseteq \aleph_\omega,$ and hence
$\mathcal{B} \models$``$\forall t \in \mathcal{G}, t \subseteq A,$
in particular,
$\mathcal{G} \subseteq \mathcal{F} \restriction A.$ On the other hand $g: B \leftrightarrow \mathcal{G}$ is a bijection, hence we have
\[
|\mathcal{F} \restriction A| \geq |\mathcal{G}| = |B|=\aleph_1 > \aleph_0.
\]
We get a contradiction and the result follows.

Similar argument shows that there can not be a family $\mathcal{F} \subseteq [\aleph_\omega]^{\aleph_0}$
as stated above.
\end{proof}

Mohammad Golshani,
School of Mathematics, Institute for Research in Fundamental Sciences (IPM), P.O. Box:
19395-5746, Tehran-Iran.

E-mail address: golshani.m@gmail.com

URL: http://math.ipm.ac.ir/golshani/

\end{document}